\definecolor{dblue}{rgb}{0.21,0.21,0.55}
\renewcommand{\P}{\mathbb{P}}
\newcommand{\E}{\mathbb{E}}
\newcommand{\N}{\mathbb{N}}
\newcommand{\R}{\mathbb{R}}
\newcommand{\1}{\mathbbm{1}}
\newcommand{\KLEINO}{{\scriptstyle{\mathcal{O}}}}
\DeclareMathAccent{\verywidehat}{\mathord}{largesymbols}{'144}
\newtheorem{prop}{Proposition}[section]
\newtheorem{lem}{Lemma}
\newtheorem{cor}[prop]{Corollary}
\newtheorem{theo}[prop]{Theorem}
\begin{document}
\renewcommand*{\thefootnote}{\fnsymbol{footnote}}

\title{Cusum tests for changes in the Hurst exponent and volatility of fractional Brownian motion}
\author[1]{Markus Bibinger\footnote{Financial support from the Deutsche Forschungsgemeinschaft (DFG) under grant 403176476 is gratefully acknowledged.}}
\address[1]{Faculty of Mathematics and Computer Science, Philipps-Universit\"at Marburg} 
\normalsize
\begin{frontmatter}

\vspace{-.7cm} 

\begin{abstract}
{{\normalsize \noindent
In this letter, we construct cusum change-point tests for the Hurst exponent and the volatility of a discretely observed fractional Brownian motion. As a statistical application of the functional Breuer-Major theorems by \cite{BEGYN} and \cite{fBM}, we show under infill asymptotics consistency of the tests and weak convergence to the Kolmogorov-Smirnov law under the no-change hypothesis. The test is feasible and pivotal in the sense that it is based on a statistic and critical values which do not require knowledge of any parameter values. Consistent estimation of the break date under the alternative hypothesis is established. We demonstrate the finite-sample properties in simulations and a data example.
}}
\begin{keyword}
Change-point test\sep cusum \sep fractional Brownian motion \sep high-frequency data \sep Hurst exponent\sep sunspot\\[.25cm] 
\MSC[2010] 60G22\sep 62M10
\end{keyword}
\end{abstract}

\end{frontmatter}
\thispagestyle{plain}

\onehalfspacing
\section{Introduction\label{sec:1}}
The (simplest) definition of fractional Brownian motion (fBm), $B^H=(B_t^H)_{t\ge 0}$, is that $B^H$ is a continuous-time Gaussian process with continuous paths which is centered, $\E[B_t^H]=0$ for all $t$, and with covariance function 
\[\E[B_t^H\,B_s^H]=\frac12 (t^{2H}+s^{2H}-|t-s|^{2H})\,,t,s\ge 0\,.\]
$B^H$ has stationary Gaussian increments $(B^H_t-B^H_s)\sim N(0,|t-s|^{2H})$ and is $H$-self-similar. Except the case $H=1/2$, when $B^{1/2}$ is Brownian motion, increments are not independent and $B^H$ is not a semi-martingale. We refer to \cite{nourdin} for more information on properties of fBm. We observe a path of fBm $B_{\sigma}^H=\sigma B^H$, with a volatility (or scaling) parameter $\sigma>0$, over the unit interval $[0,1]$ at $(n+1)$ equidistant discrete observation times, $\sigma B^H_{j \Delta_n},j=0,\ldots,n=\Delta_n^{-1}$. Denote $\Delta_j^n B_{\sigma}^H=\sigma(B^H_{j\Delta_n}-B^H_{(j-1)\Delta_n}),j=1,\ldots,n$, the observed increments called fractional Gaussian noise. We establish limit results under infill asymptotics as $n\to\infty$, $\Delta_n\to 0$.\\
The self-similarity parameter $H\in(0,1)$, called \emph{Hurst exponent}, determines the persistence and the smoothness regularity of paths of $B_{\sigma}^H$. Modelling data by fBm asks for statistical inference on $H$ and $\sigma$. There is a rich literature on estimators for $H$, but local asymptotic normality (LAN) and thus asymptotic efficiency of maximum likelihood (mle) and the whittle estimator under infill asymptotics has been established only recently by \cite{brouste}. Since the computation of the mle is infeasible for too large sample sizes due to the inversion of the covariance matrix, method of moment approaches based on power variations are prominent alternative methods which are simple to implement and fast to compute. We first restrict to the more standard case $H\in(0,3/4)$, \footnote{For $H>3/4$, non-central limit theorems with slower rates are given in \cite{tudor}. We shall use a functional central limit theorem for a sum of squared second-order increments from \cite{BEGYN} for the test in this case.} in that the following central limit theorem, when $\sigma=1$, for the normalized quadratic variation is available:
\begin{align}\label{clt}\sqrt{n}\Big(n^{2H-1}\sum_{j=1}^{ n }(\Delta_j^n B_1^H)^2-1\Big)\stackrel{d}{\longrightarrow}N(0,2\gamma)\,,\end{align}
with $\gamma$ given in \eqref{gamma}, see, for instance, \cite{tudor}. This setup is covered by the well-known theorem of \cite{breuer}. From this convergence, we derive one standard estimator for the Hurst exponent
\begin{align}\label{Hest1}\hat H_n=\frac12-\frac{\log\Big(\sum_{j=1}^{n }(\Delta_j^n B_1^H)^2\Big)}{2\log(n)}~~~(\text{only valid for~}\sigma=1)\,,\end{align}
and based on the delta method that $\sqrt{n}\log(n)(\hat H_n-H)\stackrel{d}{\longrightarrow}N(0,\gamma/2)$, see (6.21) in \cite{nourdin}. In general, if $\sigma=1$ is not true, however, $\hat H_n$ is not a legitimate estimator and does not converge to $H$. For general unknown $\sigma$, a modification of this statistic using a ratio of discrete quadratic variations at different sampling frequencies 
\begin{align}\label{Hest2}\hat H_n^{(\sigma)}=\frac{1}{2\log(2)}\log\bigg(\frac{\sum_{j=0}^{n-2}(\sigma(B^H_{(j+2)\Delta_n}-B^H_{j\Delta_n}))^2}{\sum_{j=1}^{n }(\Delta_j^n B_{\sigma}^H)^2}\bigg)\,,\end{align}
yields a consistent estimator with $\sqrt{n}$-rate. More refined related estimation methods have been presented by \cite{Coeurjolly2} and by \cite{IR} using increment ratios. Lower bounds for the convergence rates of unbiased estimators of $H$ are known to be $\sqrt{n}\log(n)$ when $\sigma$ is known and $\sqrt{n}$ in case of unknown volatility, see \cite{Coeurjolly3} or \cite{brouste}. Hence, the estimators \eqref{Hest1} and \eqref{Hest2} using power variations attain these optimal rates. The volatility can be estimated when we plug-in the estimated Hurst exponent. If $H>3/4$, these estimators preserve consistency, but the central limit theorems do no longer apply.\\
While there is a strand of literature that addresses change-point analysis for changes in the mean of fBm or related time-series models, see, for instance, \cite{betken2017} for a recent contribution, there is scant groundwork on change points of the Hurst exponent $H$. There are a few works in the time-series literature, see \cite{lavancier} and references therein, which do not include a classical cusum test based on power variations. Although cusum tests are in general very popular because of their appealing asymptotic and finite-sample properties, to the best of the author's knowledge the presented methods have not yet been discussed in the literature.\\
A necessary prerequisite for a cusum change-point test is a functional central limit theorem, also referred to as invariance principle. Proving such a result for power variations of fBm is difficult due to the dependence of increments. Nevertheless, exploiting properties of Gaussian processes such results have been established in the last years. Building upon two functional central limit theorems by \cite{BEGYN} and \cite{fBM}, we construct cusum change-point tests for a change in the Hurst exponent or the volatility parameter of the fBm. Note that we use the result from \cite{fBM} which includes general functions that admit a Hermite expansion of the Gaussian random variables only for one simple specific function. 
\section{Statistical application of functional central limit theorems for power variations\label{sec:2}}
The result by \cite{fBM} applied with the Hermite polynomial $H_2(x)=x^2-1$ of rank 2 provides for $H\in(0,3/4)$ a functional limit theorem for the normalized discrete quadratic variation
\begin{align}\label{fclt}\sqrt{n}\Big(n^{2H-1}\sum_{j=1}^{\lfloor n t\rfloor}(\Delta_j^n B_{\sigma}^H)^2-t\sigma^2\Big)\longrightarrow \sqrt{2\gamma}\sigma^2 \, W_t\,,\end{align}
as $n\rightarrow \infty$, weakly in the Skorokhod space of c\'{a}dl\'{a}g functions with a standard Brownian motion $W$ independent of $B_{\sigma}^H$. The limit process being continuous, convergence holds with respect to the uniform topology. The long-run variance is determined by
\begin{align}\label{gamma}
\gamma=\frac14 \sum_{r\in \mathds{Z}}\big(|r+1|^{2H}+|r-1|^{2H}-2|r|^{2H}\big)^2=\frac12\Big(1+\sum_{r=2}^{\infty} (D_2(({\cdot})^{2H},r))^2\Big)\,,
\end{align}
where for some function $f$, the second-order increment operator is $D_{2}(f,r):=f(r)-2f(r-1)+f(r-2), r\in\mathds{Z}$. The estimate $\big|D_2(({\cdot})^{2H},r)\big|\le |r-1|^{-|2H-1|}2H|1-2H|$, $r\ge 2$, 
$H\in(0,3/4)$, shows the convergence of the series, while the sum of squared increments exhibits long-range dependence for $H\ge 3/4$. By continuous mapping, \eqref{fclt} readily implies that
\begin{align}\label{cusum}\frac{1}{\sqrt{n}}\sum_{j=1}^{\lfloor n t\rfloor}\Big((\Delta_j^n B_{\sigma}^H)^2n^{2H}-\sum_{j=1}^n(\Delta_j^n B_{\sigma}^H)^2n^{2H-1}\Big)\longrightarrow \sqrt{2\gamma}\sigma^2\big(W_t-tW_1\big)\,,
\end{align}
that is, the convergence of the cusum process to a Brownian bridge. This functional central limit theorem can be exploited for a change-point test with the help of the next simple lemma.
\begin{lem}\label{cmt}Let $X$ and $Y$ be real-valued c\'{a}dl\'{a}g functions defined on $[0,1]$. For any $\epsilon>0$, there exists $\delta>0$, such that 
\[\|X-Y\|_{\infty}=\sup_{0\le t \le 1}|X_t-Y_t|\le\delta~~\Rightarrow ~~ \Big|\sup_{0\le t \le 1} X_t-\sup_{0\le t \le 1} Y_t\Big|\le \epsilon\,.\]
\end{lem}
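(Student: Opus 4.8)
The plan is to observe that this is simply the statement that the supremum functional is $1$-Lipschitz with respect to the uniform norm, so that $\delta=\epsilon$ works and no genuine modulus of continuity is needed. Before anything else I would record that a c\`{a}dl\`{a}g function on the compact interval $[0,1]$ is bounded (its only discontinuities are jumps, and right-continuity together with existence of left limits rules out blow-up), so that $\sup_{0\le t\le 1}X_t$ and $\sup_{0\le t\le 1}Y_t$ are finite real numbers; this is the only place the c\`{a}dl\`{a}g hypothesis is used, and it guarantees the quantities in the conclusion are well defined.

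The main step is a two-sided estimate. Assume $\|X-Y\|_{\infty}\le\delta$. For every fixed $t\in[0,1]$ we have $X_t=Y_t+(X_t-Y_t)\le Y_t+\|X-Y\|_{\infty}\le \sup_{0\le s\le 1}Y_s+\delta$; taking the supremum over $t$ on the left-hand side yields $\sup_{0\le t\le 1}X_t\le \sup_{0\le s\le 1}Y_s+\delta$. Interchanging the roles of $X$ and $Y$ gives the reverse inequality $\sup_{0\le t\le 1}Y_t\le \sup_{0\le s\le 1}X_s+\delta$. Combining the two bounds gives $\big|\sup_{0\le t\le 1}X_t-\sup_{0\le t\le 1}Y_t\big|\le\delta$, and choosing $\delta=\epsilon$ (any $\delta\le\epsilon$ would do) completes the argument.

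I do not expect a real obstacle here; the statement is phrased with an $\epsilon$--$\delta$ for convenience of later invocation via the continuous mapping theorem on the cusum process in \eqref{cusum}, but the underlying content is just the elementary inequality $\big|\sup X-\sup Y\big|\le\|X-Y\|_{\infty}$. The only point worth a sentence of care is the finiteness of the suprema, which is why the functions are taken to be c\`{a}dl\`{a}g rather than merely arbitrary; everything else is the triangle inequality.
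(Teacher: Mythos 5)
Your proof is correct and establishes exactly what the paper does, namely that the supremum functional is $1$-Lipschitz for the uniform norm so that $\delta=\epsilon$ works; the only cosmetic difference is that you argue the two-sided estimate directly while the paper phrases it contrapositively (assuming $|\sup X-\sup Y|>\epsilon$ and producing an $s$ with $|X_s-Y_s|>\epsilon$). Your added remark on boundedness of c\`{a}dl\`{a}g functions on $[0,1]$ is a harmless extra precaution that the paper leaves implicit.
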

\begin{proof}If $\big|\sup_{0\le t \le 1} X_t-\sup_{0\le t \le 1} Y_t\big|>\epsilon$, there exists $s\in[0,1]$, such that
\begin{align*}X_s-Y_s\ge X_s-\sup_{0\le t \le 1} Y_t>\epsilon~~~\mbox{or}~~~~Y_s-X_s&\ge Y_s-\sup_{0\le t \le 1} X_t>\epsilon\,,\end{align*}
and thus $\|X-Y\|_{\infty}>\epsilon$. Hence, for $\epsilon>0$ and $\delta=\epsilon$, we obtain the claimed continuity.
\end{proof}
Based on Lemma \ref{cmt} and continuous mapping, we obtain the weak convergence of the supremum of the absolute left-hand side in \eqref{cusum}, multiplied with $(\sqrt{2\gamma}\sigma^2)^{-1}$, to the law of the supremum of the absolute value of a standard Brownian bridge, referred to as the Kolmogorov-Smirnov law. 
In the vein of \cite{phillips}, this weak convergence can be exploited to test for structural breaks in the observed path of $B_{\sigma}^H$. Depending, however, on $H$, $\sigma^2$ and $\gamma$, this statistic is yet infeasible and we thus propose the following modification
\begin{align}\label{scusum}T_n&=\sup_{1\le m\le n}\big|S_{n,m}\big|\,,~\mbox{with}~\\
\notag S_{n,m}&=\frac{\tfrac{1}{\sqrt{n}}\sum_{j=1}^{m}\Big((\Delta_j^n B_{\sigma}^H)^2-\tfrac{1}{n}\sum_{j=1}^n(\Delta_j^n B_{\sigma}^H)^2\Big)}{\bigg(\hspace*{-.05cm}\sum\limits_{k=-n+1}^{n-1}\frac{1}{n}\sum\limits_{j=1\wedge (1-k)}^{(n-k)\wedge n}\Big((\Delta_j^n B_{\sigma}^H)^2-\tfrac{\sum_{j=1}^n(\Delta_j^n B_{\sigma}^H)^2}{n}\Big)\Big((\Delta_{j+k}^n B_{\sigma}^H)^2-\tfrac{\sum_{j=1}^n(\Delta_j^n B_{\sigma}^H)^2}{n}\Big)\hspace*{-.05cm}\bigg)^{\frac12}}\,,
\end{align}
where we write $a\wedge b=\min(a,b)$. $S_{n,m}$ is a cusum process of squared increments standardized with the square root of the empirical long-run variance, that is, a sum over empirical autocovariances for different lags. It holds that 
\begin{align}\label{scusumcon}T_n\stackrel{d}{\longrightarrow} \sup_{0\le t\le 1}\big|W_t-tW_1\big|\,,~~~\mbox{as}~n\to\infty\,,\end{align} 
with the Kolmogorov-Smirnov limit law. The standardization with the square root of the empirical long-run variance thus takes out the factor $\sqrt{2\gamma}\sigma^2$ in \eqref{fclt}. Moreover, we do not rescale squared increments by $n^{2H}$ in \eqref{scusum}. Multiplying numerator and denominator with $n^{2H}$, the squared denominator consistently estimates the long-run variance as $n\to\infty$, and it is positive definite by standard results from time series analysis. Slutsky's lemma in combination with the weak convergence for the infeasible statistic based on \eqref{cusum} thus gives the stated asymptotic distribution of $T_n$. 
In the next paragraph, we work out statistical properties of $T_n$ and construct a change-point test. Let us recall that \eqref{scusumcon} holds true only in the case $H<3/4$. Instead of considering functional non-central limit theorems when $H\ge 3/4$, if they were available at all, and the law of the supremum of a Rosenblatt limit process, a simpler solution to address the general case, $H\in(0,1)$, is to use an analogous statistic as \eqref{scusum} with second-order increments. For observations $(Z_j)_{0\le j\le n}$, set
\begin{align}\label{sscusum}T_n^{(2)}&=\sup_{1\le m\le n-1}\big| S_{n,m}^{(2)}\big|\,,~\mbox{with}~\\
\notag S_{n,m}^{(2)}&=\frac{\tfrac{1}{\sqrt{n}}\sum_{j=1}^{m}\Big((D_2(Z_j))^2-\tfrac{1}{n-1}\sum_{j=1}^{n-1}(D_2(Z_j))^2\Big)}{\bigg(\hspace*{-.1cm}\sum\limits_{k=-n+2}^{n-2}\frac{1}{n-1}\hspace*{-.1cm}\sum\limits_{j=1\wedge (1-k)}^{(n-k-1)\wedge (n-1)}\hspace*{-.1cm}\Big(\hspace*{-.05cm}(D_2(Z_j))^2\hspace*{-.05cm}-\hspace*{-.05cm}\tfrac{\sum_{j=1}^{n-1}(D_2(Z_j))^2}{n-1}\hspace*{-.05cm}\Big)\hspace*{-.05cm}\Big(\hspace*{-.05cm}(D_2(Z_{j+k}))^2\hspace*{-.05cm}-\hspace*{-.05cm}\tfrac{\sum_{j=1}^{n-1}(D_2(Z_j))^2}{n-1}\hspace*{-.05cm}\Big)\hspace*{-.1cm}\bigg)^{\frac12}},
\end{align}
where $D_2(Z_j)=D_2(\text{id},Z_j)=Z_{(j+1)\Delta_n}-2Z_{j\Delta_n}+Z_{(j-1)\Delta_n}$ are second-order increments. A normalized sum of squared second-order increments of fBm satisfies a (functional) central limit theorem for any $H\in(0,1)$, see Section 3.1 of \cite{BEGYN} and we thus readily derive weak convergence of $T_n^{(2)}$ to the Kolmogorov-Smirnov limit law for any $H\in(0,1)$.
\section{A cusum test for changes in the Hurst exponent\label{sec:3}}
We consider the statistical hypothesis test
\begin{equation*}
\begin{aligned}
&H_{0}:\text{We observe a discretization of a path of $B_{\sigma}^H$ with $\sigma>0$, $H\in(0,3/4)$ constant }\quad\text{vs.}\\
&H_{1}:\text{There is one }\theta\in\left(0,1\right)\text{ and }\text{we have $B_{\sigma}^H$ on $[0,\theta)$ and $B_{\sigma}^{\tilde H},|H-\tilde H|>0$, on $[\theta,1]$}.
\end{aligned}
\end{equation*}

It is standard in the theory of statistics for high-frequency data to address such questions \emph{path-wise}. This means that $H_0$ and $H_1$ are formulated for the one observed path of $(\sigma\,B_{t}^H)_{t\in[0,1]}$, and the statistical decision is based on the discretization of the given path. It is not important here how the inter-dependence structure evolves under $H_1$ around $\theta$. One can think for instance of two independent fBms intertwined at the change, but a gradual change of persistence in a vicinity of $\theta$ is possible as well. 
Next, we give the main result for the proposed cusum test.
\begin{theo}\label{thm}The sequence of tests with critical regions
\begin{align}\label{critc}C_n=\big\{T_n>q_{1-\alpha}\}\,,\end{align}
where $q_{1-\alpha}$ denotes the $\alpha$-fractile, that is, the (1-$\alpha$)-quantile, of the Kolmogorov-Smirnov law, provides an asymptotic distribution-free test for the null hypothesis $H_0$ against the alternative hypothesis $H_1$, which has asymptotic level $\alpha$ and asymptotic power 1. The consistency is valid for null sequences of $|H-\tilde H|_n$ in $n$, with $|H-\tilde H|_n\log(n)\to 0$, as long as $n^{-1/2}/\log(n)=\KLEINO(|H-\tilde H|_n)$.
\end{theo}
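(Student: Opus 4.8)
The plan is to treat the theorem's three claims — asymptotic level $\alpha$, asymptotic power one against the fixed alternative, and consistency against the indicated local alternatives — separately, all resting on the functional limit behind \eqref{cusum} and \eqref{scusumcon}. Under $H_0$ almost nothing remains to be shown: \eqref{scusumcon} gives $T_n\stackrel{d}{\longrightarrow}\sup_{0\le t\le1}|W_t-tW_1|$, and since the Kolmogorov--Smirnov law has a continuous, strictly increasing distribution function, $q_{1-\alpha}$ is a continuity point of it, whence $\P(C_n)=\P(T_n>q_{1-\alpha})\to\P(\sup_{0\le t\le1}|W_t-tW_1|>q_{1-\alpha})=\alpha$. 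Because this limiting law involves neither $H$ nor $\sigma$ nor $\gamma$, the same critical value serves everywhere under $H_0$, which is the asymptotic distribution-freeness.

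For the power I would set $\chi_j=(\Delta_j^nB_{\sigma}^H)^2$ and split $\chi_j=\E[\chi_j]+(\chi_j-\E[\chi_j])$. Under $H_1$ with break at $\theta$ one has $\E[\chi_j]=\sigma^2n^{-2H}$ for $j\le\lfloor n\theta\rfloor$ and $\E[\chi_j]=\sigma^2n^{-2\tilde H}$ for $j>\lfloor n\theta\rfloor$, so the centred means $\E[\chi_j]-n^{-1}\sum_{i\le n}\E[\chi_i]$ form a two-level step sequence; using $n^{-2\tilde H}-n^{-2H}=n^{-2H}(e^{-2(\tilde H-H)\log n}-1)$ its height is of order $n^{-2(H\wedge\tilde H)}$ for a fixed alternative and of order $|H-\tilde H|_n\log(n)\,n^{-2H}$ for a local one, the assumption $|H-\tilde H|_n\log(n)\to0$ being what legitimises this linearisation, keeps $n^{-2\tilde H}/n^{-2H}\to1$, and keeps the constant $\gamma$ in \eqref{gamma} evaluated at $\tilde H$ close to its value at $H$. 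Evaluating the numerator of $S_{n,m}$ at the true break $m=\lfloor n\theta\rfloor$ then produces a deterministic term of order $\sqrt n\,|H-\tilde H|_n\log(n)\,n^{-2H}$ (with $|H-\tilde H|_n\log(n)$ replaced by a positive constant in the fixed case), while the stochastic remainder $n^{-1/2}\sum_{j\le m}(\chi_j-\E[\chi_j])$ is $O_P(n^{-2(H\wedge\tilde H)})$ uniformly in $m$, since on each of the two sampling regimes the rescaled squared increments are a stationary Gaussian-subordinated sequence covered by the functional central limit theorems of \cite{BEGYN} and \cite{fBM} underlying \eqref{fclt}, so their centred partial sums, suitably normalised, are tight of that order. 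Hence, with probability tending to one, the numerator of $T_n\ge|S_{n,\lfloor n\theta\rfloor}|$ is at least a constant multiple of $\sqrt n\,|H-\tilde H|_n\log(n)\,n^{-2H}$.

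It remains to control the self-normalising denominator, the square root of the empirical long-run variance. Under $H_0$ the latter is $2\gamma\sigma^4$ up to the factor $n^{-4H}$ and an $o_P$-term, as recalled after \eqref{scusum}; under the alternative I would show it still sits in $\Theta_P(n^{-2(H\wedge\tilde H)})$ after taking the root — bounded below because it stays of the order of the lag-zero empirical variance $n^{-1}\sum_j(\chi_j-\bar\chi)^2$, whose magnitude is $n^{-4(H\wedge\tilde H)}$ as the autocovariance structure on each regime is that of squared fractional Gaussian noise, and bounded above because the break in the mean inflates it only by the square of a quantity that is $O(1)$ for a fixed alternative and $o(1)$ for a local one, again by $|H-\tilde H|_n\log(n)\to0$. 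Dividing numerator by denominator then yields $T_n\gtrsim_P\sqrt n\,|H-\tilde H|_n\log(n)$, respectively $T_n\gtrsim_P\sqrt n$ under a fixed alternative, and this diverges precisely when $n^{-1/2}/\log(n)=\KLEINO(|H-\tilde H|_n)$; consequently $\P(C_n)=\P(T_n>q_{1-\alpha})\to1$.

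The step I expect to be the actual obstacle is this last control of the denominator: it is exactly the question of excluding the familiar non-monotonic-power effect of cusum statistics, i.e.\ of proving that the break in the mean of the squared increments does not inflate the self-normaliser quickly enough to cancel the divergence of the numerator. This requires decomposing the sum of empirical autocovariances into a signal part, a noise part and a cross part and bounding each sharply and uniformly as $|H-\tilde H|_n\to0$. The remaining ingredients — the level statement, the first- and second-moment computations for the $\chi_j$, and the piecewise tightness from \eqref{fclt} — are comparatively routine.
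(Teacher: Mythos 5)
Your proposal is correct and follows essentially the same route as the paper: level from \eqref{scusumcon} and continuity of the Kolmogorov--Smirnov law, power by evaluating $S_{n,\lfloor n\theta\rfloor}$ after splitting the squared increments into their (two-level step) expectations plus fluctuations controlled by the functional central limit theorem, the denominator shown to be of order $n^{-2(H\wedge\tilde H)}$ by convergence to the long-run standard deviation of the dominant regime, and the detection boundary from the linearisation $1-n^{-2\delta(n)}=2\delta(n)\log(n)+\mathcal{O}(\delta^2(n)\log^2(n))$. The denominator step you flag as the potential obstacle is handled in the paper exactly as you sketch it, namely as a direct generalization of the convergence in probability of the empirical long-run variance under $H_0$.
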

\begin{proof}From \eqref{scusumcon}, we conclude that under $H_0$
\[\P_{H_0}(C_n)\to\alpha~,~\P_{H_0}(T_n\le q_{1-\alpha})\to 1-\alpha\,,\]
such that the test has asymptotic level $\alpha$. Consistency means that the type II error probabilities tend to zero:
\[\P_{H_1}(T_n\le q_{1-\alpha})\to 0\,,~\text{as}~n\to\infty\,.\]
This is implied by the following lower bound for $T_n$ under $H_1$. For the numerator of $|S_{n,\lfloor n\theta\rfloor}|$, we use that
\begin{align} &\notag\tfrac{1}{\sqrt{n}}\Big|\sum_{j=1}^{\lfloor n\theta\rfloor}\Big((\Delta_j^n B_{\sigma}^H)^2-\tfrac{1}{n}\sum_{j=1}^n(\Delta_j^n B_{\sigma}^H)^2\Big)\Big|\\
&\label{numerator}\ge \tfrac{\sigma^2}{\sqrt{n}}\Big|\sum_{j=1}^{\lfloor n\theta\rfloor}\Big(n^{-2H}-\frac{\lfloor n\theta\rfloor}{n} n^{-2H}-\frac{n-\lceil n\theta\rceil+1}{n}n^{-2\tilde H}\Big)\Big|\\
&\notag -\tfrac{1}{\sqrt{n}}\Big|\sum_{j=1}^{\lfloor n\theta\rfloor}\Big((\Delta_j^n B_{\sigma}^H)^2-\tfrac{\sigma^2}{n^{2H}}-\tfrac{1}{n}\sum_{j=1}^{\lfloor n\theta\rfloor}\big((\Delta_j^n B_{\sigma}^H)^2-\tfrac{\sigma^2}{n^{2H}}\big)-\tfrac{1}{n}\sum_{j=\lceil n\theta\rceil}^{n}\big((\Delta_j^n B_{\sigma}^H)^2-\sigma^2n^{-2\tilde H}\big)\Big)\Big|\,,
\end{align}
where we added and subtracted expectations of the squared increments and applied the reverse triangle inequality. For sufficiently large $n$, the obtained lower bound is almost surely positive such that we can drop the absolute value on the right-hand side.\\
In the sequel, we write $H\wedge \tilde H=\min(H,\tilde H)$ and $H\vee \tilde H=\max(H,\tilde H)$. The denominator of $|S_{n,\lfloor n\theta\rfloor}|$, multiplied with $n^{2(H\wedge\tilde H)}$, converges in probability to $\sqrt{2\gamma}\sigma^2\theta$ if $H<\tilde H$, or $\sqrt{2\gamma}\sigma^2(1-\theta)$ if $H>\tilde H$, respectively. This is a direct generalization of the convergence in probability of the denominator, multiplied with $n^{2H}$, under $H_0$ to the long-run standard deviation $\sqrt{2\gamma}\sigma^2$. Therefore, for sufficiently large $n$, an almost sure upper bound of this term is given by $C_{\theta}\sigma^2$ with some constant $C_{\theta}$, for instance, $C_{\theta}=2\sqrt{2\gamma}\sigma^2\max(\theta, (1-\theta))$.
By these upper and lower bounds we obtain that, almost surely for sufficiently large $n$, it holds that
\begin{align}\label{lowbound}T_n\ge |S_{n,\lfloor n\theta\rfloor}|&\ge \frac{\big|\tfrac{1}{\sqrt{n}}\sum_{j=1}^{\lfloor n\theta\rfloor}\Big(n^{-2H}-\frac{\lfloor n\theta\rfloor}{n} n^{-2H}-\frac{n-\lceil n\theta\rceil+1}{n}n^{-2\tilde H}\Big)\big|}{C_{\theta}n^{-2(H\wedge\tilde H)}}-|Z_n|\\[.1cm]
&\notag \ge C_{\theta}^{-1}\,\theta(1-\theta)\big(1+\mathcal{O}(n^{-1/2})\big)\sqrt{n}\big(1-n^{-2(H\vee\tilde H - H\wedge \tilde H)}\big)-\mathcal{O}_{\P}(1)\,.
\end{align}
The sequence of random variables $(Z_n)$ contains the second, compensated addend from the lower bound in \eqref{numerator}. They have expectation zero and converge in distribution by the same central limit theorem as under the null hypothesis and the sequence of their distributions is thus tight, i.e.\ $|Z_n|=\mathcal{O}_{\P}(1)$.\,\footnote{To show only the tightness, it suffices to apply Markov's inequality.} Hence, for fix $|H-\tilde H|$ and $\theta(1-\theta)$, $T_n\to\infty$ at rate $\sqrt{n}$. This proves the consistency of the test.\\ 
For a deeper asymptotic analysis of the test, we consider a deterministic null sequence of parameter changes $|H-\tilde H|_n=\delta(n)$. This is often referred to as `local alternative', see, for instance, Section 1 of \cite{ingster}. We determine at what rate the sequence $\delta(n)$ may go to zero, such that our test preserves consistency, that means we determine its `detection boundary'. 
For arbitrary $\epsilon>0$, there are a constant $K_{\epsilon}$ and $n_0\in\N$, such that
\begin{align}\label{op}\P\big(|Z_n|\ge K_{\epsilon}\big)< \epsilon~,~\forall~n\ge n_0\,,\end{align}
by the definition of the property $|Z_n|=\mathcal{O}_{\P}(1)$. By \eqref{lowbound}, we obtain for sufficiently large $n$ that
\begin{align*}\P_{H_1}(T_n\le q_{1-\alpha})&\le \P_{H_1}\big(C_{\theta}^{-1}\,\theta(1-\theta)\big(1+\mathcal{O}(n^{-1/2})\big)\sqrt{n}\big(1-n^{-2\delta(n)}\big)-|Z_n|\le q_{1-\alpha}\big)\,.
\end{align*}
If $\sqrt{n}(1-n^{-2\delta(n)})\to \infty$, we conclude that for sufficiently large $n$, we have that
\[C_{\theta}^{-1}\,\theta(1-\theta)\big(1+\mathcal{O}(n^{-1/2})\big)\sqrt{n}\big(1-n^{-2\delta(n)}\big)>K_{\epsilon}+q_{1-\alpha}\,,\]
such that \eqref{op} implies that there exists $N\in\N$, such that
\begin{align*}\P_{H_1}(T_n\le q_{1-\alpha})< \epsilon ~,~\forall~n\ge N\,.\end{align*}
We finish the proof by showing that the condition $\sqrt{n}(1-n^{-2\delta(n)})\to \infty$ is equivalent to the rate of the detection boundary given in the theorem. For $\delta(n)\log(n)\to 0$, we use that
\begin{align*}
1-n^{-2\delta(n)}=1-\exp\big(-2\delta(n)\log(n)\big)=2\delta(n)\log(n)+\mathcal{O}\big(\delta^2(n)\log^2(n)\big)\,.
\end{align*}
Hence, $\sqrt{n}\delta(n)\log(n)\to\infty$ ensures consistency of the test which is equivalent to $n^{-1/2}/\log(n)=\KLEINO(\delta(n))$.
\end{proof}
The lower bound \eqref{lowbound} shows that the power of the test decreases for $\theta$ closer to one boundary of the observation interval and for smaller $|H-\tilde H|$. This is a natural standard property of a change-point test. Consistency of the test directly extends to the case of more than one change in $H$. Further, a locally bounded drift term added to $B_{\sigma}^H$ will not affect the results, which follows with standard estimates for high-frequency data provided by \cite{JP}. In case of additional nuisance jumps, truncation methods can be used to obtain a robust version.\\[.2cm]
Using \eqref{sscusum}, we can drop the restriction $H<3/4$ under $H_0$.
\begin{cor}\label{corscusum}The sequence of tests with critical regions $C_n=\big\{T_n^{(2)}>q_{1-\alpha}\}$, provides an asymptotic distribution-free test for $H_0$, for any $H\in(0,1)$, against $H_1$ with asymptotic level $\alpha$ and asymptotic power 1. The consistency is valid for null sequences of $|H-\tilde H|_n$ in $n$, with $|H-\tilde H|_n\log(n)\to 0$, as long as $n^{-1/2}/\log(n)=\KLEINO(|H-\tilde H|_n)$.
\end{cor}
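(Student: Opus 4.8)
The plan is to transcribe the proof of Theorem \ref{thm} almost verbatim, with $T_n^{(2)}$ in place of $T_n$ and second-order increments in place of first-order ones, tracking only the handful of places where the constants differ. The asymptotic level is the part already indicated below \eqref{sscusum}: Section~3.1 of \cite{BEGYN} provides a functional central limit theorem for the normalized partial sums of $((D_2(Z_j))^2)_j$ when $Z=\sigma B^H$, valid for every $H\in(0,1)$; combined with the positive definiteness of the self-normalizing denominator in \eqref{sscusum} (a standard time-series fact) and Slutsky's lemma, this yields $T_n^{(2)}\stackrel{d}{\longrightarrow}\sup_{0\le t\le 1}|W_t-tW_1|$ under $H_0$, hence $\P_{H_0}(C_n)\to\alpha$, for all $H\in(0,1)$.

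For consistency I would bound $T_n^{(2)}\ge|S_{n,\lfloor n\theta\rfloor}^{(2)}|$ from below exactly as in \eqref{numerator}--\eqref{lowbound}, splitting the numerator of $S_{n,\lfloor n\theta\rfloor}^{(2)}$ into a deterministic ``signal'' and a centered ``noise'' term by adding and subtracting the expectations of the squared second-order increments. The single computation that replaces the corresponding one for first-order increments is that, by stationarity and self-similarity, $\E[(D_2(Z_j))^2]=\sigma^2(4-2^{2H})n^{-2H}$ for $Z=\sigma B^H$, a quantity strictly positive for every $H\in(0,1)$; abbreviate $c_H=4-2^{2H}$. Under $H_1$ this expectation equals $\sigma^2 c_H n^{-2H}$ for indices $j$ lying entirely in the first regime and $\sigma^2 c_{\tilde H}n^{-2\tilde H}$ for those lying entirely in the second, the $\mathcal{O}(1)$ indices whose stencil straddles $\theta$ contributing only a term of order $n^{-1/2}n^{-2(H\wedge\tilde H)}$, which is negligible. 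As before, the denominator of $|S_{n,\lfloor n\theta\rfloor}^{(2)}|$ multiplied by $n^{2(H\wedge\tilde H)}$ converges in probability to a positive constant and is thus bounded above almost surely for large $n$ by $C'_{\theta}\sigma^2$ for a suitable $C'_{\theta}$; the deterministic part of the numerator is then positive for large $n$, so that we may drop the absolute value, and collecting terms gives, almost surely for large $n$,
\[T_n^{(2)}\ \ge\ (C'_{\theta})^{-1}\theta(1-\theta)\big(1+\mathcal{O}(n^{-1/2})\big)\sqrt n\,\big(c_H n^{-2H}-c_{\tilde H}n^{-2\tilde H}\big)n^{2(H\wedge\tilde H)}-|Z_n^{(2)}|,\]
where $(Z_n^{(2)})$ is centered and tight, i.e.\ $|Z_n^{(2)}|=\mathcal{O}_{\P}(1)$, by the same central limit theorem (or just Markov's inequality). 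The factor $\big(c_H n^{-2H}-c_{\tilde H}n^{-2\tilde H}\big)n^{2(H\wedge\tilde H)}$ tends to $c_H$ if $H<\tilde H$ and to $-c_{\tilde H}$ if $H>\tilde H$, hence to a nonzero constant; so for fixed $|H-\tilde H|$ and $\theta(1-\theta)$ the lower bound grows at rate $\sqrt n$, giving $\P_{H_1}(T_n^{(2)}\le q_{1-\alpha})\to0$.

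The detection boundary follows as in the last part of the proof of Theorem \ref{thm}. Writing $\tilde H=H+\delta(n)$ with $\delta(n)=|H-\tilde H|_n\to0$ and $\delta(n)\log n\to0$ (the case $H>\tilde H$ being symmetric), I would expand $\big(c_H n^{-2H}-c_{\tilde H}n^{-2\tilde H}\big)n^{2H}=c_H-c_{\tilde H}n^{-2\delta(n)}$ using $n^{-2\delta(n)}=1-2\delta(n)\log n+\mathcal{O}(\delta^2(n)\log^2 n)$ together with the smoothness of $H\mapsto c_H=4-2^{2H}$, which makes $c_H-c_{\tilde H}=\mathcal{O}(\delta(n))$. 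This gives $c_H-c_{\tilde H}n^{-2\delta(n)}=2c_H\,\delta(n)\log n+\KLEINO(\delta(n)\log n)$, since both $c_H-c_{\tilde H}=\mathcal{O}(\delta(n))$ and the remainder $\mathcal{O}(\delta^2(n)\log^2 n)$ are of smaller order than $\delta(n)\log n$. The deterministic lower bound for $T_n^{(2)}$ is therefore, up to constants, $\theta(1-\theta)\sqrt n\,\delta(n)\log n$, and invoking the analogue of \eqref{op} for $(Z_n^{(2)})$ exactly as in Theorem \ref{thm} yields $\P_{H_1}(T_n^{(2)}\le q_{1-\alpha})<\epsilon$ for all large $n$ whenever $\sqrt n\,\delta(n)\log n\to\infty$, equivalently $n^{-1/2}/\log n=\KLEINO(\delta(n))$.

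The one step that requires slightly more than copying the proof of Theorem \ref{thm} — and hence the main, if still routine, obstacle — is controlling the self-normalizing denominator of $S_{n,m}^{(2)}$ under the two-regime model $H_1$: one must check that the empirical long-run variance of $((D_2(Z_j))^2)_j$, multiplied by $n^{2(H\wedge\tilde H)}$, stays bounded and bounded away from zero in probability. This holds because its defining double sum splits — up to the $\mathcal{O}(1)$ straddling indices, which are negligible after the $n^{2(H\wedge\tilde H)}$ scaling — into the two within-regime empirical long-run variances, of exact orders $n^{-2H}$ and $n^{-2\tilde H}$, so that the one carrying the smaller Hurst exponent dominates and is positive definite by the same time-series argument as under $H_0$. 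This parallels the corresponding statement for the denominator of $S_{n,m}$ established in the proof of Theorem \ref{thm}; with it in hand, every remaining estimate is routine.
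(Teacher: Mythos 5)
Your proposal is correct and follows essentially the same route as the paper: the level statement comes from the functional CLT for squared second-order increments in Section 3.1 of \cite{BEGYN} plus self-normalization, and consistency with the stated detection boundary comes from repeating the lower-bound argument of Theorem \ref{thm} with $\E[(D_2(Z_j))^2]=c_H\sigma^2\Delta_n^{2H}$, $c_H=4-2^{2H}$, in place of $\sigma^2 n^{-2H}$, leading to the same factor of order $\sqrt{n}\,\delta(n)\log(n)$ (the paper bounds $c_H-c_{\tilde H}\Delta_n^{2(\tilde H-H)}\ge c_H(1-\Delta_n^{2(\tilde H-H)})$ for $H<\tilde H$, which is equivalent to your expansion). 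Your additional remarks on the straddling indices and on the denominator under $H_1$ only spell out details the paper leaves implicit.
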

For the lower bound of $T_n^{(2)}$ under $H_1$, we use that $(D_2(Z_j))_{1\le j\le (n-1)}$ are centered and 
\begin{align}\label{sod}\E[(D_2(Z_j))^2]=\sigma^2(4-2^{2H})\Delta_n^{2H}=c_H\sigma^2\Delta_n^{2H}\,,\end{align}
with $0<c_H<3$, and $c_H$ decreasing in $H$. We determine the lower bound under $H_1$ and the detection boundary analogously as for $T_n$ in the proof of Theorem \ref{thm}. If without loss of generality $H<\tilde H$, an almost sure lower bound for sufficiently large $n$ is given by
\begin{align*}T_n^{(2)}\ge |S_{n,\lfloor n\theta\rfloor}^{(2)}|&\ge c\,\theta(1-\theta)\sqrt{n}\big(c_H-c_{\tilde H}\Delta_n^{2(\tilde H - H)}\big)-\mathcal{O}_{\P}(1)\\
&\ge c\,(4-2^{2H})\theta(1-\theta)\sqrt{n}\big(1-\Delta_n^{2(\tilde H - H)}\big)-\mathcal{O}_{\P}(1)\,,
\end{align*}
with an almost surely positive constant $c$. This proves Corollary \ref{corscusum}.
\section{A cusum test for changes in the volatility\label{sec:4}}
We prove that $T_n$ reacts as well to a change of $\sigma^2$ and thus provides a statistical hypothesis test for
\begin{equation*}
\begin{aligned}
&H_{0}:\text{We observe a discretization of a path of $B_{\sigma}^H$ with $\sigma>0$, $H\in(0,3/4)$ constant }\quad\text{vs.}\\
&H^{\sigma}_{1}:\text{There is one }\theta\in\left(0,1\right)\text{ and }\text{we have $B_{\sigma}^H$ on $[0,\theta)$ and $B_{\tilde\sigma}^{H},|\sigma^2-\tilde \sigma^2|>0$, on $[\theta,1]$}.
\end{aligned}
\end{equation*}
A related lower bound for $T_n$ under $H^{\sigma}_1$ as in \eqref{lowbound} yields with an almost surely finite constant $C$ and an almost surely positive constant $c$ that 
\begin{align*}T_n\ge |S_{n,\lfloor n\theta\rfloor}|&\ge \frac{\big|\tfrac{1}{\sqrt{n}}\sum_{j=1}^{\lfloor n\theta\rfloor}\big(\sigma^2n^{-2H}-\theta \sigma^2n^{-2H}-(1-\theta)\tilde\sigma^2n^{-2 H}\big)\big|}{Cn^{-2H}\max(\sigma^2,\tilde\sigma^2)}-\mathcal{O}_{\P}(1)\\[.1cm]
&\notag \ge c\,\theta(1-\theta)\sqrt{n}\bigg(1-\frac{\min(\sigma^2,\tilde\sigma^2)}{\max(\sigma^2,\tilde\sigma^2)}\bigg)-\mathcal{O}_{\P}(1)\,.
\end{align*}
We obtain the following corollary to Theorem \ref{thm}.
\begin{cor}\label{corr}The sequence of tests with critical regions \eqref{critc}, provides an asymptotic distribution-free test for the null hypothesis $H_0$ against the alternative hypothesis $H_1^{\sigma}$, which has asymptotic level $\alpha$ and asymptotic power 1. The consistency is valid for decreasing null sequences of $|\sigma^2-\tilde \sigma^2|_n$ in $n$, as long as $n^{-1/2}=\KLEINO(|\sigma^2-\tilde \sigma^2|_n)$.
\end{cor}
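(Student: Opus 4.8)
The plan is to follow the proof of Theorem~\ref{thm} line by line, with the change in the Hurst exponent replaced by a change in $\sigma^2$; the only structural simplification is that the $\log(n)$ factor disappears. First, the asymptotic level is immediate: since $T_n$ does not depend on $H_1^{\sigma}$ and \eqref{scusumcon} holds verbatim under $H_0$, one gets $\P_{H_0}(T_n\le q_{1-\alpha})\to 1-\alpha$ and hence $\P_{H_0}(C_n)\to\alpha$, exactly as in the first display of the proof of Theorem~\ref{thm}. For the power I would invoke the lower bound for $T_n$ under $H_1^{\sigma}$ derived just above the corollary, namely
\[T_n\ge |S_{n,\lfloor n\theta\rfloor}|\ge c\,\theta(1-\theta)\sqrt{n}\Big(1-\tfrac{\min(\sigma^2,\tilde\sigma^2)}{\max(\sigma^2,\tilde\sigma^2)}\Big)-\mathcal{O}_{\P}(1)\,,\]
with $c>0$ almost surely. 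For a fixed alternative the bracketed factor is a fixed positive number, so the deterministic part diverges at rate $\sqrt{n}$, dominates the $\mathcal{O}_{\P}(1)$ remainder, and therefore $\P_{H_1^{\sigma}}(T_n\le q_{1-\alpha})\to 0$: the test is consistent with asymptotic power $1$.

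For the detection boundary I would let $|\sigma^2-\tilde\sigma^2|_n=\delta(n)\to 0$ be a deterministic null sequence. Since $\sigma^2>0$ is fixed and $\tilde\sigma^2\in\{\sigma^2-\delta(n),\sigma^2+\delta(n)\}\to\sigma^2$, the quantity $\max(\sigma^2,\tilde\sigma^2)$ stays in a compact subset of $(0,\infty)$, so
\[1-\frac{\min(\sigma^2,\tilde\sigma^2)}{\max(\sigma^2,\tilde\sigma^2)}=\frac{|\sigma^2-\tilde\sigma^2|_n}{\max(\sigma^2,\tilde\sigma^2)}=\frac{\delta(n)}{\sigma^2}\big(1+o(1)\big)\,,\]
i.e.\ it is of the exact order $\delta(n)$. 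Hence the requirement $\sqrt{n}\big(1-\min(\sigma^2,\tilde\sigma^2)/\max(\sigma^2,\tilde\sigma^2)\big)\to\infty$ coming from the lower bound is equivalent to $\sqrt{n}\,\delta(n)\to\infty$, which is precisely $n^{-1/2}=\KLEINO(\delta(n))$, the rate stated in the corollary. No $\log(n)$ appears because here the numerator is driven by the finite difference $\sigma^2-\tilde\sigma^2$ directly, rather than by $n^{-2H}-n^{-2\tilde H}=n^{-2H}\big(1-\exp(-2(\tilde H-H)\log n)\big)$ as in Theorem~\ref{thm}.

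The remaining argument is a copy of the end of the proof of Theorem~\ref{thm}: for $\epsilon>0$, choose $K_\epsilon$ and $n_0$ so that the $\mathcal{O}_{\P}(1)$ remainder satisfies the tail bound \eqref{op} for $n\ge n_0$; if $\sqrt{n}\,\delta(n)\to\infty$, then for $n$ large the deterministic part of the lower bound exceeds $K_\epsilon+q_{1-\alpha}$, whence $\P_{H_1^{\sigma}}(T_n\le q_{1-\alpha})<\epsilon$ for all sufficiently large $n$. The step that needs genuine care is the derivation of the displayed lower bound itself, in full analogy with \eqref{lowbound}: one adds and subtracts the segment-wise expectations $\sigma^2 n^{-2H}$ on $[0,\theta)$ and $\tilde\sigma^2 n^{-2H}$ on $[\theta,1]$, uses the reverse triangle inequality to peel off a deterministic leading term and a compensated fluctuation term, bounds the denominator from above by $C\,n^{-2H}\max(\sigma^2,\tilde\sigma^2)$ using that the empirical long-run variance is positive definite and converges in probability also under $H_1^{\sigma}$ (generalising its behaviour under $H_0$, now with the larger-variance segment dominating the limit), and checks that the compensated fluctuation term is still $\mathcal{O}_{\P}(1)$ — which holds because on each segment the increments remain fractional Gaussian noise with the same $H$ and a constant scaling, so the Breuer-Major type central limit theorem underlying \eqref{scusumcon} applies segment-wise. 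Everything else repeats the scheme of the proof of Theorem~\ref{thm} without change.
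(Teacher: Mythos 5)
Your proposal is correct and follows essentially the same route as the paper: the level follows from \eqref{scusumcon} under $H_0$, and the power and detection boundary follow from the lower bound $T_n\ge c\,\theta(1-\theta)\sqrt{n}\big(1-\min(\sigma^2,\tilde\sigma^2)/\max(\sigma^2,\tilde\sigma^2)\big)-\mathcal{O}_{\P}(1)$ displayed just before the corollary, with the observation that $1-\min/\max$ is of exact order $|\sigma^2-\tilde\sigma^2|_n$, so no $\log(n)$ enters. The paper itself gives no further detail beyond this bound and the reference back to the proof of Theorem \ref{thm}, which is precisely the argument you spell out.
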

Thus, the test rejects the null hypothesis under both types of changes. There are opportunities to discriminate the two types of changes based on the different behavior of the ratio of expected squared increments before and after the break at time $\theta$. We present one in the next paragraph. With \eqref{sod} we obtain an analogous result for the test based on \eqref{sscusum} for any $H\in(0,1)$ under $H_0$ as well. In view of the rates in the LAN result by \cite{brouste}, we can see that our tests attain asymptotic minimax-optimal rates. This means that parameter differences smaller than the orders stated in Theorem \ref{thm} and Corollary \ref{corr} are impossible to detect and determine the minimax detection boundary.
\section{Estimation of the change point and discriminating the type of change\label{sec:5}}
If a change-point test rejects the null hypothesis of no change, the estimation of the time of a change, referred to as the break date, becomes of interest. We prove consistency of the argmax-estimator associated with our statistic \eqref{scusum} under the alternative hypothesis with one break in $H$ at time $\theta\in(0,1)$. The result analogously extends to a break in $\sigma^2$, and using an iterative algorithm it may be extended to multiple changes.
\begin{prop}\label{propest}Under the alternative hypothesis $H_1$, the estimator
\begin{align}\label{est}\hat\theta_n=\Delta_n\,\operatorname{argmax}_{1\le m\le n}\big|S_{n,m}\big|\end{align}
satisfies that $|\hat\theta_n-\theta|=\mathcal{O}_{\P}\big(\log(n)n^{-1/2}\big)$.
\end{prop}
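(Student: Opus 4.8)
The plan is to exploit that the denominator in \eqref{scusum} does not depend on $m$, so that $\operatorname{argmax}_{1\le m\le n}|S_{n,m}|=\operatorname{argmax}_{1\le m\le n}|U_m|$, where, writing $X_j=(\Delta_j^n B_{\sigma}^H)^2$ and rescaling by the $m$-free constant $n^{2(H\wedge\tilde H)}$ (which I may take to be $n^{2H}$ after assuming without loss of generality $H<\tilde H$),
\[
U_m=\frac{n^{2H}}{\sqrt n}\sum_{j=1}^{m}\Big(X_j-\tfrac1n\sum_{l=1}^nX_l\Big)=\sqrt n\,g_n\big(\tfrac mn\big)+V_n\big(\tfrac mn\big),\qquad g_n\big(\tfrac mn\big):=\E[U_m],\quad V_n\big(\tfrac mn\big):=U_m-\E[U_m].
\]
Using $\E[X_j]=\sigma^2n^{-2H}$ for $j\le\lfloor n\theta\rfloor$ and $\E[X_j]=\sigma^2n^{-2\tilde H}$ for $j\ge\lceil n\theta\rceil$, exactly the computation behind the lower bound \eqref{numerator} identifies $g_n$, up to a boundary correction of order $n^{-1}$, with the tent function $t\mapsto \sigma^2\big(1-n^{-2(\tilde H-H)}\big)\big(t(1-\theta)\1_{\{t\le\theta\}}+\theta(1-t)\1_{\{t>\theta\}}\big)$. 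In particular $g_n$ attains its maximum $g_n^{\ast}$ at $m^{\ast}:=\lfloor n\theta\rfloor$ and obeys the linear lower bound
\[
g_n^{\ast}-g_n\big(\tfrac mn\big)\ \ge\ c\,\big|\tfrac mn-\theta\big|\qquad (1\le m\le n,\ n\ \text{large}),
\]
with a constant $c>0$ bounded away from zero for large $n$, since $|H-\tilde H|$ is fixed and hence $1-n^{-2(\tilde H-H)}\to1$; as in the remark after Theorem~\ref{thm}, $c$ is proportional to $\min(\theta,1-\theta)$, so $g_n$ flattens when the break sits near an endpoint.

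The heart of the matter is a uniform bound on the stochastic remainder. For each $m$, $V_n(m/n)$ is a quadratic functional of the Gaussian vector $(B^H_{j\Delta_n})_{j}$, i.e.\ an element of the second Wiener chaos, with
\[
\var\big(V_n(\tfrac mn)\big)=\frac{n^{4H}}{n}\,\var\Big(\sum_{j=1}^{m}\big(X_j-\tfrac1n\textstyle\sum_{l}X_l\big)\Big)\ \le\ C
\]
\emph{uniformly} in $1\le m\le n$ and in $n$. This bound is precisely the summability of the autocovariances of squared fractional Gaussian noise, i.e.\ the convergence of the series in \eqref{gamma}, and is where the restriction $H<3/4$ enters (the contributions of the post-break increments, of scale $n^{-4\tilde H}\le n^{-4H}$, and of the $\mathcal O(1)$ mixed covariances in a neighbourhood of $\theta$, are of smaller or equal order). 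By Gaussian hypercontractivity on the second chaos this upgrades to a uniform sub-exponential tail, $\P\big(|V_n(m/n)|\ge x\big)\le C_1e^{-c_1x}$ for all $x\ge0$, $m$ and $n$; a union bound over $m=1,\dots,n$ then gives $\P\big(\max_{1\le m\le n}|V_n(m/n)|\ge A\log n\big)\le C_1n^{\,1-c_1A}\to0$ for $A$ large, that is $\max_{1\le m\le n}|V_n(m/n)|=\mathcal O_{\P}(\log n)$. (If one instead invokes a full functional central limit theorem for the cusum process under $H_1$ one gets $\sup_m|V_n(m/n)|=\mathcal O_{\P}(1)$ and a faster rate; I follow the economy of the proof of Theorem~\ref{thm}, which uses only pointwise moment bounds, and the price is exactly the replacement of a bounded by a logarithmic maximum over the $n$ candidate break points.)

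Combining the two ingredients finishes the proof. Fix $A$ with $C_1n^{1-c_1A}\to0$ and work on the event $\Omega_n=\{\max_{1\le m\le n}|V_n(m/n)|\le A\log n\}$, which has probability tending to one. Then $|U_{m^{\ast}}|\ge\sqrt n\,g_n^{\ast}-A\log n$ because $g_n\ge0$ and $g_n(m^{\ast}/n)=g_n^{\ast}$, whereas for every $m$ with $|m/n-\theta|>r_n:=A_0\log(n)/\sqrt n$,
\[
|U_m|\ \le\ \sqrt n\,g_n\big(\tfrac mn\big)+A\log n\ \le\ \sqrt n\,g_n^{\ast}-\big(cA_0-A\big)\log n\ <\ |U_{m^{\ast}}|
\]
as soon as $A_0>2A/c$. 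Hence on $\Omega_n$ no $m$ outside $\{\,|m/n-\theta|\le r_n\,\}$ can maximise $|U_m|$, equivalently $|S_{n,m}|$, so $\hat\theta_n=\Delta_n\operatorname{argmax}_{1\le m\le n}|S_{n,m}|$ satisfies $|\hat\theta_n-\theta|\le r_n+\Delta_n=\mathcal O\big(\log(n)n^{-1/2}\big)$ on an event of probability tending to one, which is the assertion. The step I expect to be the real work is the uniform-in-$m$ variance estimate for $V_n(m/n)$: it rests on the covariance structure of squared fractional Gaussian noise being summable --- the same structural fact that underlies \eqref{fclt} and fails for $H\ge3/4$ --- together with the bookkeeping that the post-break increments and the mixed covariances around $\theta$ do not destroy it; after that, hypercontractivity, the union bound and the elementary tent geometry are routine.
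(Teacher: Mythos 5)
Your proof is correct and follows the same skeleton as the paper's: decompose the (denominator-free) cusum numerator into a deterministic tent function peaking at $\theta$ plus a centered fluctuation, and read off the rate from the point at which the linear drop of the tent dominates the maximal fluctuation over the $n$ candidate break points. The difference lies in how the fluctuation is controlled. The paper computes the same tent function $f$, asserts that the remainder is $\mathcal{O}_{\P}(1)$ uniformly in $m$ (implicitly via the tightness behind \eqref{fclt}), and then delegates the argmax comparison to an external lemma (Lemma 1 of the cited reference); you instead make this step self-contained, using the uniform-in-$m$ variance bound for the second Wiener chaos (i.e.\ the summability of the autocovariances of squared fractional Gaussian noise for $H<3/4$, with the bookkeeping for the post-break and mixed covariances), hypercontractivity, and a union bound, which yields a maximum of order $\mathcal{O}_{\P}(\log n)$ and hence exactly the stated rate $\log(n)n^{-1/2}$ --- arguably a more transparent account of where the logarithm comes from than the paper's own writeup. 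Two minor points: the display introducing $g_n$ is internally inconsistent (you set $g_n(m/n):=\E[U_m]$ but then write $U_m=\sqrt{n}\,g_n(m/n)+V_n(m/n)$; it should read $\E[U_m]=\sqrt{n}\,g_n(m/n)$, and your subsequent use of $g_n$ as the $\mathcal{O}(1)$ tent is consistent with this correction), and the uniform variance bound does require, as you acknowledge, checking that the covariances across the break do not spoil summability, which follows by Cauchy--Schwarz under any of the dependence structures around $\theta$ that the paper admits.
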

\begin{proof}
Without loss of generality, we consider observations $(Z_j)_{0\le j\le n}$, with
\begin{align*}\E[(\Delta_j^n Z)^2]=\begin{cases}\sigma^2n^{-2H}&,\,1\le j\le i_n^*\\ \sigma^2n^{-2\tilde H}&,\,i_n^*+1\le j\le n\end{cases}\,,\end{align*}
where $i_n^*=\theta n\in\N$ and $n^{-2H}=n^{-2\tilde H}+\delta_H$, $\delta_H>0$. Generalizations of the proof to $\delta_H<0$, and when one increment is affected by $H$ and $\tilde H$ are obvious. Define $f:[0,1]\to\R$, a piecewise constant function with
\[f(t)=\sum_{m=1}^n \E\Big[\sum_{j=1}^{m}\Big((\Delta_j^n Z)^2-\frac{1}{n}\sum_{j=1}^n(\Delta_j^n Z)^2\Big)\Big]\,\1_{((m-1)\Delta_n,m\Delta_n]}(t)\,,\]
and $f(0)=0$. We obtain for $0\le m\le n$, that
\begin{align*}f(m\Delta_n)&=\begin{cases}\sigma^2n^{-2H}m\big(1-\frac{i_n^*}{n}\big)-\sigma^2n^{-2\tilde H}\,m\big(1-\frac{i_n^*}{n}\big)&,\,0\le m\le i_n^*\\ \sigma^2n^{-2H}i_n^*\big(1-\frac{m}{n}\big)-\sigma^2n^{-2\tilde H}\,i_n^*\big(1-\frac{m}{n}\big)&,\,i_n^*<m\le n\end{cases}\\
&=\begin{cases}\sigma^2\delta_H m\big(1-\frac{i_n^*}{n}\big)&,\,0\le m\le i_n^*\\ \sigma^2\delta_H i_n^*\big(1-\frac{m}{n}\big)&,\,i_n^*<m\le n\end{cases}\,.
\end{align*}
We see that $f$ is non-negative and increasing for $t< \theta$, and decreasing for $t>\theta$, with a unique maximum at $\theta$. $f(m\Delta_n)/\sqrt{n}$ is the expectation of the numerator of the cusum process in \eqref{scusum}. Thus, $S_{n,m}=f(m\Delta_n)/\sqrt{n}+\mathcal{O}_{\P}(1)$ uniformly in $m$. For $\delta_H>0$, it holds that $T_n>0$, with a probability tending to 1, and the leading term is larger than the $\mathcal{O}_{\P}(1)$-term with a probability tending to 1. Hence, the convergence rate is implied by the relation 
\[\frac{1}{\sqrt{n}}\big(f(i_n^*\Delta_n)-f(i_n^*\Delta_n-\gamma_n)\big)=\sigma^2\delta_H\,\sqrt{n}\,\gamma_n\big(1-\frac{i_n^*}{n}\big)\to\infty\,,\]
as long as $\gamma_n\sqrt{n}\to\infty$. More precisely, we apply Lemma 1 of \cite{mehmet} to the function $f{\big|}_{\left[0,\theta\right]}$, which yields for $\gamma_n\sqrt{n}\to\infty$ that with a probability converging to 1 we have that
\begin{align*}
i_n^* \Delta_n\ge \hat\theta_n \ge i_n^* \Delta_n-\gamma_{n}\,.
\end{align*}
An analogous application of the same lemma to the function $f{\big|}_{\left[\theta,1\right]}$, yields that with a probability converging to 1 it holds that
\begin{align*}
i_n^* \Delta_n \le  \hat\theta_n \le i_n^*\Delta_n+\gamma_{n}\,.
\end{align*}
\end{proof}
One opportunity to distinguish which type of change occurs when the test rejects the null hypothesis uses the estimator $\hat\theta_n$.
\begin{cor}With $\hat\theta_n$ from \eqref{est}, for $\hat\theta_n n \in\{1,\ldots,n-1\}$ and observations $(Z_j)_{0\le j\le n}$, set
\begin{align}Q_n=\frac{\frac{1}{\hat\theta_n n}\sum_{j=1}^{\hat\theta_n n}(\Delta_j^n Z)^2}{\frac{1}{(n-\hat\theta_n n)}\sum_{j=\hat\theta_n n+1}^{n}(\Delta_j^n Z)^2}\,,\end{align}
and $Q_n=0$, else. It holds that 
\begin{align}Q_n\stackrel{\P}{\longrightarrow}\begin{cases}\infty \,,&~\mbox{under}~H_1~\mbox{with}~H<\tilde H\,,\\ \frac{\sigma^2}{\tilde\sigma^2} \,,&~\mbox{under}~H_1^{\sigma}\,,\\ 0 \,,&~\mbox{under}~H_1~\mbox{with}~H>\tilde H\,,\end{cases}\,\end{align}
as $n\to\infty$.
\end{cor}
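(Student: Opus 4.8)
The plan is to combine the consistency of $\hat\theta_n$ from Proposition \ref{propest} (together with its announced extension to a break in $\sigma^2$) with the consistency of the quadratic variation on each of the two regimes, and to show that the $\mathcal{O}_{\P}(\sqrt n\,\log n)$ increments which $\hat\theta_n n$ misclassifies relative to $i_n^*=\theta n$ are asymptotically negligible. First I would record the elementary preliminaries. Since $\hat\theta_n n$ is the maximiser of $m\mapsto|S_{n,m}|$, it is an integer in $\{1,\ldots,n\}$, and since Proposition \ref{propest} gives $|\hat\theta_n n-i_n^*|=\mathcal{O}_{\P}(\sqrt n\,\log n)=o_{\P}(n)$ with $\theta\in(0,1)$, we get $\P(\hat\theta_n n\in\{1,\ldots,n-1\})\to1$ (so the default value $Q_n=0$ occurs with vanishing probability), $\hat\theta_n\stackrel{\P}{\to}\theta$, and $\hat\theta_n n\asymp n$, $n-\hat\theta_n n\asymp n$ with probability tending to one. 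Write $p=H\wedge\tilde H$ for the smaller Hurst exponent.

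Let $A_n$ and $B_n$ denote the numerator and denominator averages defining $Q_n$. For $A_n$ I would split $\sum_{j=1}^{\hat\theta_n n}(\Delta_j^n Z)^2=\sum_{j=1}^{i_n^*}(\Delta_j^n Z)^2+P_n^{\mathrm{num}}$, where $P_n^{\mathrm{num}}$ is, up to sign, the sum of $(\Delta_j^n Z)^2$ over the at most $|\hat\theta_n n-i_n^*|$ indices lying strictly between $\hat\theta_n n$ and $i_n^*$, and likewise $\sum_{j=\hat\theta_n n+1}^{n}(\Delta_j^n Z)^2=\sum_{j=i_n^*+1}^{n}(\Delta_j^n Z)^2+P_n^{\mathrm{den}}$. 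On the event $\{|\hat\theta_n n-i_n^*|\le K_{\epsilon}\sqrt n\,\log n\}$, which by Proposition \ref{propest} has probability at least $1-\epsilon$ for $n$ large, each of $P_n^{\mathrm{num}},P_n^{\mathrm{den}}$ is a sum of at most $K_{\epsilon}\sqrt n\,\log n$ squared increments whose expectations are all bounded by $\max(\sigma^2,\tilde\sigma^2)\,n^{-2p}$ (each equals $\sigma^2 n^{-2H}$ or $\sigma^2 n^{-2\tilde H}$ under $H_1$, resp.\ $\sigma^2 n^{-2H}$ or $\tilde\sigma^2 n^{-2H}$ under $H_1^{\sigma}$, as in the proof of Proposition \ref{propest}), so Markov's inequality yields $n^{2p}P_n^{\mathrm{num}}=\mathcal{O}_{\P}(\sqrt n\,\log n)=o_{\P}(n)$ and the same for $P_n^{\mathrm{den}}$. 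For the two clean sums I would invoke the consistency of the quadratic variation of fractional Brownian motion, which for a Hurst index below $3/4$ is contained in \eqref{clt}/\eqref{fclt} and persists for any Hurst index in $(0,1)$ (Section \ref{sec:1}): the increments $\Delta_j^n Z$, $1\le j\le i_n^*$, are those of a fBm $\sigma B^H$, so $n^{2H-1}\sum_{j=1}^{i_n^*}(\Delta_j^n Z)^2\stackrel{\P}{\to}\theta\sigma^2$, while $\Delta_j^n Z$, $i_n^*+1\le j\le n$, are by stationarity of fractional Gaussian noise distributed as the increments of a fBm over an interval of length $1-\theta$, so $n^{2\tilde H-1}\sum_{j=i_n^*+1}^{n}(\Delta_j^n Z)^2\stackrel{\P}{\to}(1-\theta)c$ with $c=\sigma^2$ under $H_1$ and $c=\tilde\sigma^2$ under $H_1^{\sigma}$ (with exponent $H$ in place of $\tilde H$ in the latter case, since there $\tilde H=H$).

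Multiplying both $A_n$ and $B_n$ by $n^{2p}$, the remainders contribute $o_{\P}(1)$ after dividing by $\hat\theta_n n\asymp n$ resp.\ $n-\hat\theta_n n\asymp n$, the clean sum over the segment with the larger Hurst exponent $H\vee\tilde H$ acquires the extra factor $n^{2p-2(H\vee\tilde H)}$ (which tends to $0$ unless $H=\tilde H$), and $\hat\theta_n\stackrel{\P}{\to}\theta$; hence $n^{2p}A_n\stackrel{\P}{\to}a$ and $n^{2p}B_n\stackrel{\P}{\to}b$ with $(a,b)=(\sigma^2,0)$ if $H<\tilde H$ (then $p=H$ and $b$ vanishes because $n^{2H-2\tilde H}\to0$), $(a,b)=(0,\sigma^2)$ if $H>\tilde H$ (then $p=\tilde H$ and $a$ vanishes because $n^{2\tilde H-2H}\to0$), and $(a,b)=(\sigma^2,\tilde\sigma^2)$ under $H_1^{\sigma}$ (where $H=\tilde H=p$). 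Since $Q_n=A_n/B_n=(n^{2p}A_n)/(n^{2p}B_n)$ with $n^{2p}B_n>0$, Slutsky's lemma gives $Q_n\stackrel{\P}{\to}\sigma^2/\tilde\sigma^2$ under $H_1^{\sigma}$, $Q_n\stackrel{\P}{\to}0$ under $H_1$ with $H>\tilde H$, and, because $n^{2p}A_n\stackrel{\P}{\to}\sigma^2>0$ while $n^{2p}B_n\stackrel{\P}{\to}0$, $\P(Q_n>M)\ge\P(n^{2p}A_n>\tfrac{\sigma^2}{2},\,n^{2p}B_n<\tfrac{\sigma^2}{2M})\to1$ for every $M>0$ under $H_1$ with $H<\tilde H$, i.e.\ $Q_n\stackrel{\P}{\to}\infty$.

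The main point to get right — rather than any deep obstacle — is twofold. First, $A_n$ and $B_n$ a priori live on the different scales $n^{-2H}$ and $n^{-2\tilde H}$, so one must normalise both by the \emph{common} factor $n^{2p}$, $p=H\wedge\tilde H$, after which both genuinely converge and exactly one limit entry degenerates to $0$ — and it is this degeneracy that produces the values $\infty$ and $0$ in the statement. Second, the misclassified block $P_n^{\mathrm{num}},P_n^{\mathrm{den}}$ is a random index set of random size, possibly on either side of $i_n^*$; this is handled by conditioning on the high-probability rate event of Proposition \ref{propest}, after which it reduces to a sum of a deterministically bounded number of squared increments controlled by a first-moment bound, which, crucially, is of order $o_{\P}(n)$ after the $n^{2p}$-rescaling and hence negligible against the dominant clean term.
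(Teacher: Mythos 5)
Your proposal is correct and follows essentially the same route as the paper's proof: localize $\hat\theta_n$ at $\theta$ via Proposition \ref{propest}, then reduce the ratio to the ratio of the expected squared increments on the two regimes, which equals $n^{-2(H-\tilde H)}$ under $H_1$ and $\sigma^2/\tilde\sigma^2$ under $H_1^{\sigma}$. You merely spell out in more detail what the paper absorbs into its $(1+\KLEINO_{a.s.}(1))$ factor, namely the first-moment bound on the $\mathcal{O}_{\P}(\sqrt{n}\log n)$ misclassified increments and the common normalization by $n^{2(H\wedge\tilde H)}$.
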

\begin{proof}
We have $\hat\theta_n\in \{1,\ldots,n\}$ by definition and since $\theta\in(0,1)$, $\hat\theta_n\ne n$ almost surely for $n$ sufficiently large. By Proposition \ref{propest}, $\P(n|\hat\theta_n-\theta|=\KLEINO(n))\to 1$, as $n\to\infty$. Thus, we obtain that
\begin{align*}Q_n&=\frac{\frac{1}{\theta n}\sum_{j=1}^{\theta n}(\Delta_j^n Z)^2}{\frac{1}{(n-\theta n)}\sum_{j=\theta n+1}^{n}(\Delta_j^n Z)^2}\big(1+\KLEINO_{a.s.}(1)\big)\\
&=\frac{\E[(\Delta_1^n Z)^2]}{\E[(\Delta_n^n Z)^2]}\big(1+\KLEINO_{\P}(1)\big)=\begin{cases}n^{-2(H-\tilde H)} \,,&~\mbox{under}~H_1\\ \frac{\sigma^2}{\tilde\sigma^2} \,,&~\mbox{under}~H_1^{\sigma}\end{cases}\,.\hfill\qedhere
\end{align*}
\end{proof}
Analogous results can be proved for statistics based on \eqref{sscusum} along the same lines.

\section{Simulations and data example\label{sec:6}}
\begin{figure}[t]
\begin{center}
\fbox{
\includegraphics[width=5.64cm]{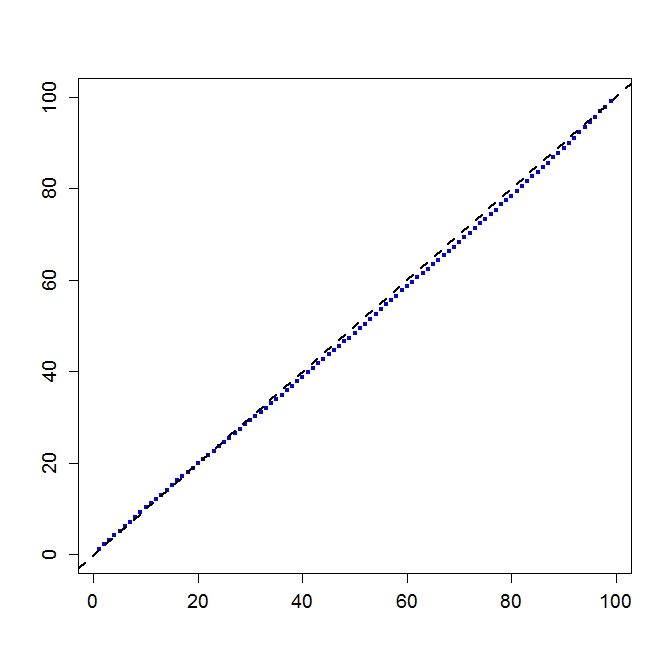}~~~~\includegraphics[width=5.928cm]{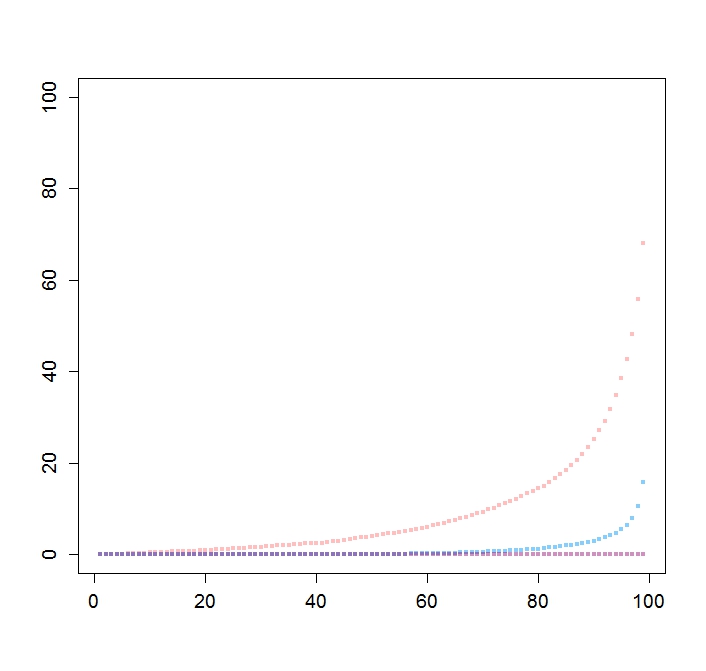}}
\caption{\label{Fig:1}Left: Empirical size of the test under $H_0$. 
Right: Empirical type II error rates for the test under $H_1$.}
\end{center}\end{figure} 

We simulate discretizations of $B_{\sigma}^H$ using the code by \cite{fBm2} with the Cholesky method. 
The parameter configuration under the null hypothesis $H_0$ is $\sigma=2$ and $H=0.2$. 
We illustrate the empirical size and power of the test by plotting the relative amount of realizations of $T_n$ from \eqref{scusum} smaller or equal to the percentiles of the Kolmogorov-Smirnov law against those percentiles. The results are given in Figure \ref{Fig:1} under the null hypothesis and alternative hypotheses, respectively.

For $n=100$, under $H_0$, the left plot in Figure \ref{Fig:1} confirms a highly accurate fit by the Kolmogorov-Smirnov limit law based on 10,000 Monte Carlo iterations. For other values of $H\in(0,3/4)$, we obtain as well a high accuracy of the fit. Figure \ref{Fig:2} shows histograms of 10,000 Monte Carlo iterations under $H_0$, for $\sigma=2$, $H=0.2$ and $n=100$, and different alternative hypotheses, when $n=100$ for $\tilde H=0.3$ and $\tilde H=0.4$, and when $n=1,000$ for $\tilde H=0.3$ and $\tilde H=0.4$. As expected, the power increases with increasing $|\tilde H-H|$ and with larger sample sizes. The left histogram under $H_0$ closely tracks the asymptotic Kolmogorov-Smirnov law. The factors between medians (or quantiles close to the center) for the same changes $H-\tilde H$, and for the two different sample sizes, are close to the expected factors $\sqrt{n}$ determined by the rate we have found in Section \ref{sec:3}. For the large sample sizes, the histograms under $H_1$ show fat tails which, however, barely reduce the power of the test in these cases. The right plot in Figure \ref{Fig:1} depicts the percentage type II error rates for the same specifications of $H_1$ as in Figure \ref{Fig:2}. The testing levels of interest are located on the right of the x-axis, that is, the x-axis gives the percentage values of $(1-\alpha)$ when $\alpha$ is the testing level. For sample size $n=1,000$, we basically have a power of one at all reasonable testing levels in both considered cases. The curves for $n=100$ with $\tilde H=0.3$ and $\tilde H=0.4$, are hence more informative. The ordering of the different curves is clear as we have larger power for larger  $|H-\tilde H|$ and larger $n$ (compare to the histograms). Using $T_n^{(2)}$ instead of $T_n$, we obtain analogous empirical distributions under the alternative hypotheses while the size under the null hypothesis is a bit better for $T_n$. However, when $H=0.85$ under the null hypothesis, the left plot of Figure \ref{Fig:3} shows that the empirical distribution of $T_n$ is not close to the Kolmogorov-Smirnov law any more, while the empirical distribution of $T_n^{(2)}$ attains a good size. Under alternative hypotheses with a change of large Hurst exponents, both statistics show similar empirical distributions again. Overall, a very good power for moderate finite sample sizes of our tests is confirmed. This is in line with classical findings in change-point theory for parametric cusum tests.\\[.2cm]
\begin{figure}[t]
\centering\includegraphics[width=10cm]{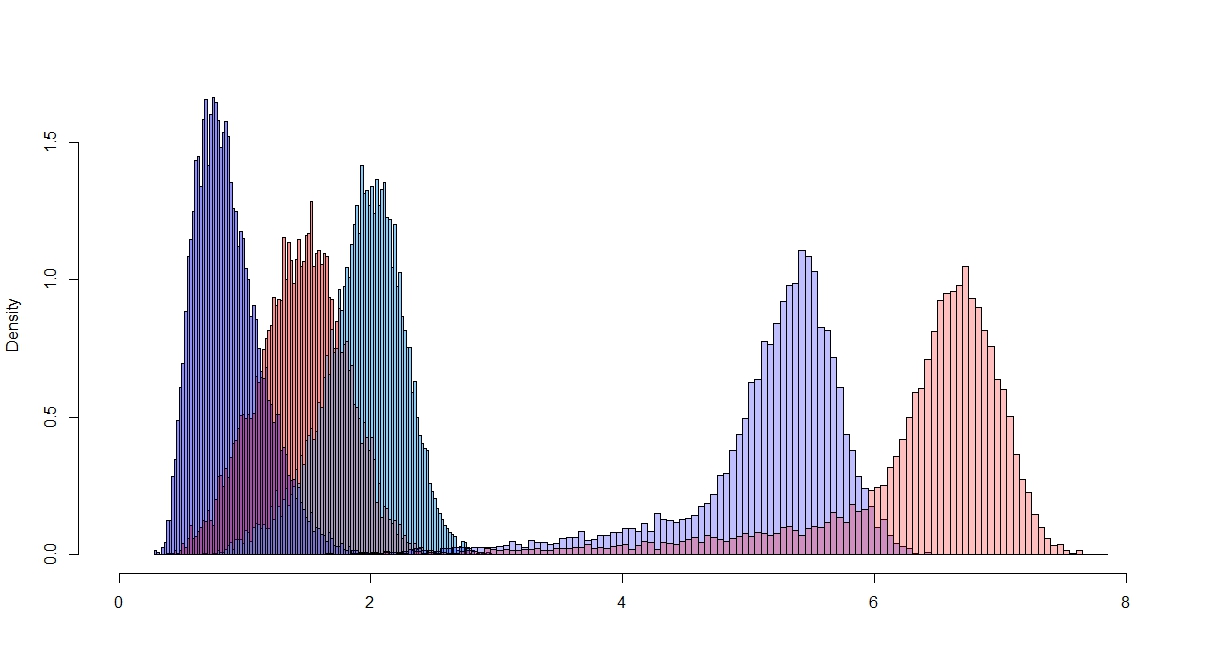}
\caption{\label{Fig:2}Left: Histogram for realizations of $T_n$ from \eqref{scusum} under $H_0$ and $H_1$ for $n=100$ with $\tilde H=0.3$ and $\tilde H=0.4$, and for $n=1,000$ with $\tilde H=0.3$ and $\tilde H=0.4$ (left to right).}\end{figure} 
For a real data example, we use the daily total sunspot number\footnote{Source: WDC-SILSO, Royal Observatory of Belgium, Brussels, \url{http://www.sidc.be/silso/datafiles}, accessed on April 3, 2019.} from 1848/12/23 to 2019/03/31 having $n=62,190$ daily observations. Time series of sunspots are often analyzed using fBm models, see, for instance, \cite{ssnfractals}. Estimator \eqref{Hest2} and the second-order increments ratio estimator from \cite{IR} applied to all data both yield $H\approx 0.469$. However, $T_n^{(2)}\approx 21.6$, such that we clearly reject the hypothesis that $H$ is constant. A classical R/S estimate from the increments yields a different value of ca.\ 0.32 for $H$. This method is often used in the applied literature. We do not rely on this R/S method, however, since \cite{bardet2018} notes that ``a convincing asymptotic study of such an estimator'' does \emph{not} exist, and \cite{taqqufractals} have demonstrated that the obtained estimates are in general not accurate. Sunspots have a periodic behavior with at least one cycle of about 11 years, compare Figure \ref{Fig:3}. Especially for non-stationary time series, we obtained inaccurate R/S Hurst exponent estimates also in simulation experiments. \\
The right-plot of Figure \ref{Fig:3} shows 21 point estimates for the Hurst exponent on time blocks with 3,000 days. Since more than 10,000 daily increments and around 2,000 second-order increments are zero, we need to adjust the increments ratio estimator and we see some relevant differences between both estimators. Nevertheless, the empirical findings allow to conclude that a multifractional Brownian motion model is better suited and that the Hurst exponent is not constant and has larger values in a period after 1950 than before. Looking at sub-samples of the time series, we find that for 3-years block length the test does not reject the hypothesis of a constant Hurst exponent in these blocks at 10\%-level in 7 of the 63 blocks with a minimum value of $T^{(2)}_n\approx 0.65$. For 1-year block length it is not rejected for 81 from 170 years. This indicates that within smaller time intervals fBm may be used as a suitable model.


\begin{figure}[t]
\fbox{
\includegraphics[width=4.55cm]{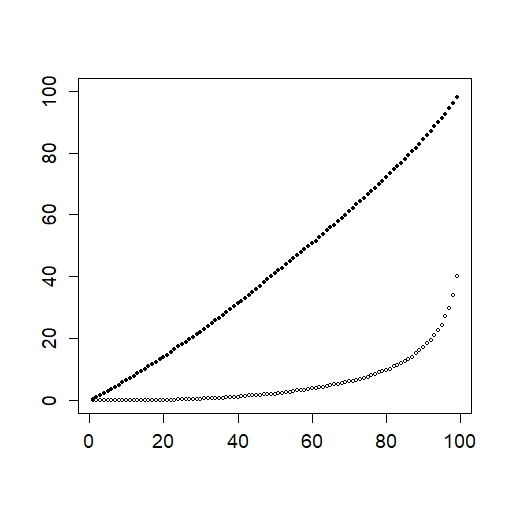}~\includegraphics[width=4.7cm]{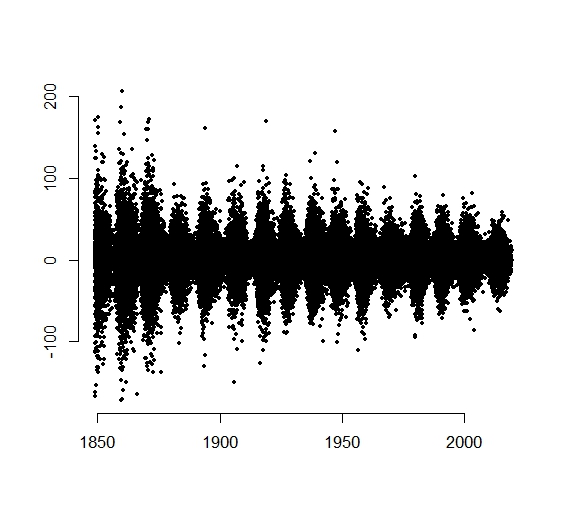}~\includegraphics[width=4.94cm]{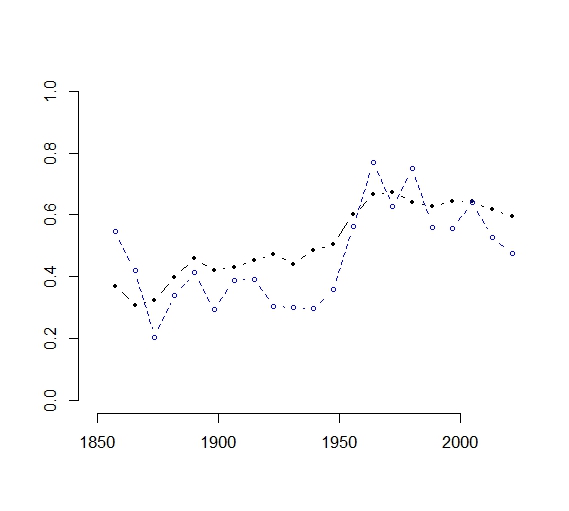}}
\caption{\label{Fig:3}Left: Empirical size of tests based on $T_n^{(2)}$ (black dots) and $T_n$ under $H_0$ with $H=0.85$. Middle: Sunspot daily increments.
Right: Estimated Hurst exponents on blocks by \eqref{Hest2} (back dots) and increment ratios.}\end{figure} 


\addcontentsline{toc}{section}{References}
\bibliographystyle{chicago}
\bibliography{literatur}

\begin{thebibliography}{}

\bibitem[\protect\citeauthoryear{Bardet}{Bardet}{2018}]{bardet2018}
Bardet, J.-M. (2018).
\newblock Theoretical and numerical comparisons of the parameter estimator of
  the fractional {B}rownian motion.
\newblock In {\em Mathematical structures and applications}, STEAM-H: Sci.
  Technol. Eng. Agric. Math. Health, pp.\  153--173. Springer, Cham.

\bibitem[\protect\citeauthoryear{Bardet and Surgailis}{Bardet and
  Surgailis}{2011}]{IR}
Bardet, J.-M. and D.~Surgailis (2011).
\newblock Measuring the roughness of random paths by increment ratios.
\newblock {\em Bernoulli\/}~{\em 17\/}(2), 749--780.

\bibitem[\protect\citeauthoryear{B\'{e}gyn}{B\'{e}gyn}{2007}]{BEGYN}
B\'{e}gyn, A. (2007).
\newblock Functional limit theorems for generalized quadratic variations of
  gaussian processes.
\newblock {\em Stochastic Process. Appl.,\/}~{\em 117\/}(12), 1848 -- 1869.

\bibitem[\protect\citeauthoryear{Betken}{Betken}{2017}]{betken2017}
Betken, A. (2017).
\newblock Change point estimation based on wilcoxon tests in the presence of
  long-range dependence.
\newblock {\em Electron. J. Statist.\/}~{\em 11\/}(2), 3633--3672.

\bibitem[\protect\citeauthoryear{Bibinger and Madensoy}{Bibinger and
  Madensoy}{2019}]{mehmet}
Bibinger, M. and M.~Madensoy (2019).
\newblock Change-point inference on volatility in noisy {I}tô semimartingales.
\newblock {\em Stochastic Process. Appl.,\/}~{\em 129\/}(12), 4878 -- 4925.

\bibitem[\protect\citeauthoryear{Breuer and Major}{Breuer and
  Major}{1983}]{breuer}
Breuer, P. and P.~Major (1983).
\newblock Central limit theorems for nonlinear functionals of {G}aussian
  fields.
\newblock {\em J. Multivariate Anal.\/}~{\em 13\/}(3), 425--441.

\bibitem[\protect\citeauthoryear{Brouste and Fukasawa}{Brouste and
  Fukasawa}{2018}]{brouste}
Brouste, A. and M.~Fukasawa (2018).
\newblock Local asymptotic normality property for fractional {G}aussian noise
  under high-frequency observations.
\newblock {\em Ann. Statist.\/}~{\em 46\/}(5), 2045--2061.

\bibitem[\protect\citeauthoryear{Coeurjolly}{Coeurjolly}{2000}]{fBm2}
Coeurjolly, J.-F. (2000).
\newblock {Simulation and identification of the fractional Brownian motion: a
  bibliographical and comparative study}.
\newblock {\em J. Stat. Softw.\/}~{\em 50\/}(7), 1--53.

\bibitem[\protect\citeauthoryear{Coeurjolly}{Coeurjolly}{2001}]{Coeurjolly2}
Coeurjolly, J.-F. (2001).
\newblock Estimating the parameters of a fractional {B}rownian motion by
  discrete variations of its sample paths.
\newblock {\em Stat. Inference Stoch. Process.\/}~{\em 4\/}(2), 199--227.

\bibitem[\protect\citeauthoryear{Coeurjolly and Istas}{Coeurjolly and
  Istas}{2001}]{Coeurjolly3}
Coeurjolly, J.-F. and J.~Istas (2001).
\newblock Cram\'er-{R}ao bounds for fractional {B}rownian motions.
\newblock {\em Statist. Probab. Lett.\/}~{\em 53\/}(4), 435--447.

\bibitem[\protect\citeauthoryear{Ingster and Suslina}{Ingster and
  Suslina}{2003}]{ingster}
Ingster, Y.~I. and I.~A. Suslina (2003).
\newblock {\em Nonparametric goodness-of-fit testing under {G}aussian models},
  Volume 169 of {\em Lecture Notes in Statistics}.
\newblock Springer-Verlag, New York.

\bibitem[\protect\citeauthoryear{Jacod and Protter}{Jacod and
  Protter}{2012}]{JP}
Jacod, J. and P.~Protter (2012).
\newblock {\em Discretization of processes}.
\newblock Springer.

\bibitem[\protect\citeauthoryear{Lavancier, Leipus, Philippe, and
  Surgailis}{Lavancier et~al.}{2013}]{lavancier}
Lavancier, F., R.~Leipus, A.~Philippe, and D.~Surgailis (2013).
\newblock Detection of nonconstant long memory parameter.
\newblock {\em Econometric Theory\/}~{\em 29\/}(5), 1009–1056.

\bibitem[\protect\citeauthoryear{{Nourdin}}{{Nourdin}}{2012}]{nourdin}
{Nourdin}, I. (2012).
\newblock {\em {Selected aspects of fractional Brownian motion.}}
\newblock Milano: Springer.

\bibitem[\protect\citeauthoryear{Nourdin and Nualart}{Nourdin and
  Nualart}{2019}]{fBM}
Nourdin, I. and D.~Nualart (2019).
\newblock The functional breuer--major theorem.
\newblock {\em Probab. Theory Related Fields,\/}~{\em forthcoming}.

\bibitem[\protect\citeauthoryear{Nourdin, Nualart, and Tudor}{Nourdin
  et~al.}{2010}]{tudor}
Nourdin, I., D.~Nualart, and C.~A. Tudor (2010).
\newblock Central and non-central limit theorems for weighted power variations
  of fractional {B}rownian motion.
\newblock {\em Ann. Inst. Henri Poincar\'{e} Probab. Stat.\/}~{\em 46\/}(4),
  1055--1079.

\bibitem[\protect\citeauthoryear{Phillips}{Phillips}{1987}]{phillips}
Phillips, P. C.~B. (1987).
\newblock Time series regression with a unit root.
\newblock {\em Econometrica\/}~{\em 55\/}(2), 277--301.

\bibitem[\protect\citeauthoryear{Shaikh, Khan, Iqbal, Behere, and
  Bagare}{Shaikh et~al.}{2008}]{ssnfractals}
Shaikh, Y.~H., A.~R. Khan, M.~I. Iqbal, S.~H. Behere, and S.~P. Bagare (2008).
\newblock Sunspots data analysis using time series.
\newblock {\em Fractals\/}~{\em 16\/}(03), 259--265.

\bibitem[\protect\citeauthoryear{Taqqu, Teverovsky, and Willinger}{Taqqu
  et~al.}{1995}]{taqqufractals}
Taqqu, M.~S., V.~Teverovsky, and W.~Willinger (1995).
\newblock Estimators for long-range dependence: An empirical study.
\newblock {\em Fractals\/}~{\em 03\/}(04), 785--798.

\end{thebibliography}
\end{document}